\documentclass[10pt]{amsart}
\usepackage{amsmath}
\usepackage{amsfonts}
\usepackage{amssymb}

\newtheorem{lemma}{Lemma}
\newtheorem{theo}{Theorem}
\newtheorem{claim}{Claim}

\newtheorem{rem}{Remark}
\newcommand{\abs}[1]{\ensuremath{\left| #1 \right|}}
\newcommand{\sett}[1]{\ensuremath{\left \{ #1 \right \}}}
\newcommand{\ip}[2]{\ensuremath{\left<#1,#2\right>}}
\newcommand{\RRd}{{\mathbb{R}^d}}
\newcommand{\Zdst}{{\mathbb{Z}^d}}
\newcommand{\Rdst}{{\mathbb{R}^d}}
\newcommand{\norm}[1]{\lVert#1\rVert}
\newcommand{\bC}{{\mathbb{C}}}

\title{Explicit localization estimates for spline-type spaces}
\author{Jos\'e Luis Romero}
\address{Departamento de
Matem\'atica \\ Facultad de Ciencias Exactas y Naturales\\ Universidad
de Buenos Aires\\ Ciudad Universitaria, Pabell\'on I\\ 1428 Capital
Federal\\ ARGENTINA\\ and CONICET, Argentina}
\email[Jos\'e Luis Romero]{jlromero@dm.uba.ar}


\begin{document}
\maketitle

\begin{abstract}
In this article we derive some explicit decay estimates for the dual system of a basis of functions polynomially localized in space.
\end{abstract}
\section{Introduction}
A spline-type space $S$ is a closed subspace of $L^2(\RRd)$ possessing a Riesz basis of functions well localized is space. That is, there exists a family of functions $\sett{f_k}_k \subseteq S$ and constants
$0<A \leq B < +\infty$ such that
\begin{equation}
\label{expan}
A \norm{c}^2_{\ell^2} \leq \norm{\sum_k c_k f_k}^2_{L^2} \leq B \norm{c}^2_{\ell^2},
\end{equation}
holds for every $c \in \ell^2$, and the functions $\sett{f_k}_k$ satisfy an spatial localization condition\footnote{In particular, the series in the equation is required to converge unconditionally.}.

In a spline-type space any function in $f \in S$ has a unique expansion $f=\sum_k c_k f_k$. Moreover, the coefficients are given by $c_k=\ip{f}{g_k}$, where $\sett{g_k}_k \subseteq S$ is the \emph{dual basis},
a set of functions characterized by the biorthogonality relation $\ip{g_k}{f_j} = \delta_{k,j}$.

The general theory of localized frames (see \cite{gr04}, \cite{fogr05} and \cite{bacahela06}) asserts that the functions forming the dual basis satisfy a similar spatial localization. This can be used to extend
the expansion in \eqref{expan} to other spaces, so that the family $\sett{f_k}_k$ becomes a Banach frame for an associated family of Banach spaces (see \cite{fegr89} and \cite{gr04}). In the case of a spline-type space $S$, this means that the decay of a function in $S$ can be characterized by the decay of its coefficients and that, in particular, the functions $\sett{f_k}_k$ form a so called $p$-Riesz basis for its $L^p$-closed linear span, for the whole range $1 \leq p \leq \infty$.

The purpose of this article is to derive, in some concrete case, explicit bounds for the localization of the dual basis. We will work with a set of functions satisfying a polynomial decay condition around a set of nodes forming a lattice. By a change of variables, we can assume that the lattice is $\Zdst$. So, we will consider a set of functions $\sett{f_k}_k \subseteq L^2(\RRd)$ satisfying the condition,
\begin{equation}
\label{nodes}
\abs{f_k(x)} \leq C \left(1+\abs{x-k} \right)^{-s},
\quad
x \in \RRd
\mbox{ and }
k \in \Zdst,
\end{equation}
for some constant $C$. This type of spatial localization is specifically covered by the results in \cite{gr04}, but the constants given there are not explicit. We will derive a polynomial decay condition for the dual basis $\sett{g_k}_k$, giving explicit information on the resulting constants. This yields some qualitative information, like the dependence of theses constants on $A,C$ and $s$ and the corresponding $p$-Riesz basis bounds for the original basis. In particular, it implies that if a family of Riesz basic sequences is given, where all the functions satisfy the concentration condition in equation \eqref{nodes} uniformly, and have a uniform lower basis bound, then the corresponding dual systems are also uniformly concentrated in space.

Since the localization condition in equation \eqref{nodes} is stable under small perturbations, the explicit estimates in Theorem \eqref{main} can be used to derive various kinds of stability conclusions. In particular, it could be used to derive jitter-error estimates for sampling in general spline-type spaces.

The results in \cite{gr04} prescribe polynomial decay estimates for the dual basis similar to those possessed by the original basis. As a trade-off for the explicit constants we will not obtain the full preservation of these decay conditions (see Remark \ref{rates}). Nevertheless, any degree of polynomial decay on the dual system can be granted, provided that the original basis has sufficiently good decay.

Finally observe that, although the basis $\sett{f_k}_k$ is assumed to be concentrated around a lattice of nodes, the functions $f_k$ are not assumed to be shifts of a single function. In particular, Theorem \ref{main} below allows for a basis of functions whose `optimal' concentration nodes do not form a lattice but are comparable to one - for example, it is possible to consider a system of functions constructed as integer shifts of a single one and then translate arbitrarily finitely many of them. The `eccentricity' of the configuration of concentration nodes is, however, penalized by the constants modelling the decay.

\section{Assumptions and statements}
\begin{theo}
\label{main}
Let $C \geq 1$, and let $t>d$ be integers. Let $s>d+t$ be a real number.
For $k \in \Zdst$, let $f_k: \Rdst \to \bC$ be a measurable function such that
\[
\abs{f_k(x)} \leq C \left(1+\abs{x-k} \right)^{-s},
\qquad (x \in \RRd).
\]
Suppose that $\sett{f_k}_k$ is a Riesz basis for $S$, its 
closed linear span within $L^2$, with bounds $0<A \leq B < \infty$
(that is, equation \eqref{expan} holds.)
Then, the dual functions satisfy,
\[
\abs{g_k(x)} \leq D \left(1+\abs{x-k} \right)^{-t},
\qquad (x \in \RRd).
\]
where D is given by,
\[
D = \frac{E^{t^2} C^{2t+1}}{A^{t+1}} \left( 1+ \frac{1}{s-t-d} \right)^t,
\]
for some constant $E>0$ that only depends on the dimension $d$.
\end{theo}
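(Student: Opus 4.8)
The plan is to pass to the Gram matrix $G=\bigl(\ip{f_j}{f_k}\bigr)_{j,k\in\Zdst}$, prove an explicit polynomial off-diagonal decay estimate for $G^{-1}$, and transfer it back to the dual functions. Since $\sett{f_k}_k$ is a Riesz basis with bounds $A,B$, the matrix $G$ is positive definite and invertible on $\ell^2(\Zdst)$ with $AI\le G\le BI$, and $g_k=\sum_j (G^{-1})_{k,j}f_j$. The backbone is the elementary estimate, valid whenever $\min(r,s)>d$,
\[
\int_{\RRd}(1+\abs{x-j})^{-s}(1+\abs{x-m})^{-r}\,dx
\ \leq\ \kappa\,(1+\abs{j-m})^{-\min(r,s)},
\]
together with its $\Zdst$-summation analogue, where $\kappa=\kappa(r,s,d)$ is bounded by a dimensional constant times $1+(s-r-d)^{-1}$ when $r\le s$ (this is the source of the factor $1+(s-t-d)^{-1}$ in $D$). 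Taking $r=s$ gives $\abs{G_{j,k}}\le \kappa_0C^2(1+\abs{j-k})^{-s}$, so $G$ lies in the rate-$s$ off-diagonal decay class with norm $\le\kappa_0C^2$, and a Schur-test estimate on this decay gives $B=\norm{G}\le E_0C^2$ for a dimensional $E_0$; this is how $B$ is eliminated from the final bound (everything below may depend on $B$ only through this inequality). Conversely, once one has $\abs{(G^{-1})_{k,j}}\le\widetilde D\,(1+\abs{j-k})^{-t}$, the formula $g_k=\sum_j(G^{-1})_{k,j}f_j$ and the discrete–continuous version of the estimate give $\abs{g_k(x)}\le\kappa_1C\widetilde D\,(1+\abs{x-k})^{-t}$, accounting for the remaining factor of $C$ in $D$. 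So everything reduces to a matrix statement: a positive-definite $G$ on $\ell^2(\Zdst)$ with $AI\le G\le BI\le E_0C^2\,I$ and rate-$s$ decay of constant $\le\kappa_0C^2$ has $G^{-1}$ of rate-$t$ decay with $\widetilde D\sim E^{t^2}C^{2t}A^{-(t+1)}(1+(s-t-d)^{-1})^t$.

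For the matrix estimate I would iterate on the decay exponent, gaining one power at a time. Set $R=I-\tfrac{2}{A+B}G$, so $R$ commutes with $G$, inherits rate-$s$ decay with constant $\lesssim C^2A^{-1}$, satisfies $\norm{R}\le\tfrac{B-A}{B+A}\le 1-\delta$ with $\delta=A/(E_0C^2)$, and $G^{-1}=\tfrac{2}{A+B}\sum_{n<N}R^n+R^NG^{-1}$ for every $N$. Assume inductively $\abs{(G^{-1})_{j,k}}\le D_r(1+\abs{j-k})^{-r}$, starting from $D_0=A^{-1}$ (which follows from $\norm{G^{-1}}\le A^{-1}$). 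Fix $j,k$ with $n=\abs{j-k}$ large. The partial sum $\tfrac{2}{A+B}\sum_{n'<N}R^{n'}$ is a polynomial in $G$, hence has rate-$s$ decay with a constant growing geometrically in $N$ (ratio $\sim EC^2A^{-1}$); the surplus $(1+n)^{-(s-r-1)}$ — available because $s-r-1\ge s-t>d\ge 0$ — upgrades this to rate $r+1$ at the cost of a bounded power of $N$. For the remainder, use $R^NG^{-1}=G^{-1}R^N$, bound the entries of $R^Ne_k$ both by $(1-\delta)^N$ (operator norm) and by $(\text{const}\cdot EC^2A^{-1})^N(1+\cdot)^{-s}$ (decay), take the geometric mean of the two bounds, and contract the rate-$r$ a priori bound on $G^{-1}$ against the result; since $s-r>d$, the summation leaves $R^NG^{-1}$ of rate $r$ with a constant exponentially small in $N$. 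Choosing $N$ comparable to $\log(1+n)$ — the constant chosen to balance the two pieces, and $\log(1+n)\lesssim_\varepsilon(1+n)^\varepsilon$ used to absorb the resulting $N^{O(1)}$ — yields $\abs{(G^{-1})_{j,k}}\le D_{r+1}(1+\abs{j-k})^{-(r+1)}$ with $D_{r+1}\le\mu D_r$ and $\mu$ of the form $E^{t}C^2A^{-1}(1+(s-t-d)^{-1})$. The $2^r$-type losses produced when splitting the sums according to whether the running index is closer to $j$ or to $k$ are harmless, since $r\le t$ and $2^t\le E^t$. After $t$ steps one gets $\widetilde D=A^{-1}\mu^t$, and the reduction above gives $D$ in the stated form after enlarging $E$.

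The main obstacle is precisely the independence of $\mu$ from $B$: a naive Neumann expansion brings the condition number $B/A$ into the constants through the convergence rate $\tfrac{B-A}{B+A}$, and retaining only $\log(1+n)$ many terms — which is what keeps the geometrically growing partial-sum constant from spoiling the decay, at the price of not recovering the full rate $s$ — is the device that both fixes this and forces the loss $t<s-d$. This is the trade-off mentioned in the introduction, and the point where the argument departs from Jaffard's functional-calculus proof, which keeps the full rate but yields non-explicit constants. The delicate bookkeeping is to verify that each of the $t$ inductive steps multiplies the constant by a single factor $\mu$ that does not itself grow with $r$ or $s$, so that $t$ iterations produce $E^{t\cdot t}=E^{t^2}$ rather than something like $(t!)^{O(1)}$; the remaining ingredients — the explicit convolution and Schur constants, the base case $D_0=A^{-1}$, and the final transfer — are routine.
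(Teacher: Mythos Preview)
Your reduction to a quantitative off-diagonal decay estimate for $G^{-1}$ and the transfer back via $g_k=\sum_j(G^{-1})_{k,j}f_j$ are exactly right, and the elimination of $B$ via $B\le E_0C^2$ is also what the paper does. The gap is in the inductive step itself. With $\delta=A/(E_0C^2)$ and $\kappa\rho\approx E_1C^2/A$, your two constraints on $N$ read, for a fixed target entry with $n=\abs{j-k}$,
\[
N\ \ge\ \eta^{-1}\log(1+n)\quad\text{(remainder small)},\qquad
N\ \le\ \frac{s-r-1}{\log(\kappa\rho)}\,\log(1+n)\quad\text{(partial sum has surplus decay)},
\]
where $\eta$ is the decay rate you obtain from the geometric-mean interpolation of $(1-\delta)^N$ and $(\kappa\rho)^N(1+\cdot)^{-s}$. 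To keep any spatial decay in the interpolated bound you need the interpolation parameter $\theta>d/s$, and then $\eta\approx\delta-\theta\log(\kappa\rho)$ forces $\eta\lesssim\delta$. Hence compatibility of the two constraints requires
\[
\eta^{-1}\log(\kappa\rho)\ \approx\ \frac{\log(\kappa\rho)}{\delta}\ \approx\ \frac{C^2}{A}\,\log\!\frac{C^2}{A}\ \le\ s-r-1,
\]
which is a restriction on $s$ in terms of $C^2/A$ that the theorem does \emph{not} assume (it only asks $s>d+t$). Equivalently, if you let the gain per step be whatever the argument allows, you gain at most $O\!\bigl((s-t)\,\delta/\log(\kappa\rho)\bigr)$ powers per step, so reaching rate $t$ takes $\gtrsim t\,(C^2/A)\log(C^2/A)$ steps, and the accumulated constant is no longer of the form $E^{t^2}C^{2t}A^{-(t+1)}$. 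This is the well-known obstruction to getting condition-number-free constants out of truncated Neumann/Chebyshev expansions; bounding $B$ by $E_0C^2$ removes $B$ from the statement but not the ratio $B/A\sim C^2/A$ from the exponents.

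The paper avoids this entirely by an algebraic device rather than an analytic one. Writing $D_h(L)_{k,j}=(k_h-j_h)L_{k,j}$, the map $D_h$ is a derivation, and the Leibniz identity applied to $M^{-1}M=I$ gives
\[
D_h^u(M^{-1})\ =\ -\sum_{l=0}^{u-1}\binom{u}{l}\,D_h^l(M^{-1})\,D_h^{u-l}(M)\,M^{-1},
\]
which yields the clean recursion $\norm{D_h^u(M^{-1})}\lesssim A^{-1}C^2W_{s-t}\sum_{l<u}\binom{u}{l}\norm{D_h^l(M^{-1})}$. Iterating $t$ times gives $\norm{D_h^t(M^{-1})}\lesssim_{t^2}A^{-(t+1)}C^{2t}W_{s-t}^t$, and then $\abs{k_h-j_h}^t\abs{(M^{-1})_{k,j}}\le\norm{D_h^t(M^{-1})e_j}_2$ delivers the entrywise bound with exactly the stated constants. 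The only place a Neumann-type argument (in fact Jaffard's theorem, used as a black box) enters is to justify \emph{a priori} that $D_h^u(M^{-1})$ is a bounded operator so that the Leibniz identity can be manipulated; no quantitative information is extracted from it. If you want to keep your self-contained philosophy, you could use your argument to supply that qualitative input (boundedness of $D_h^u(M^{-1})$ for $u\le t$, with some constant), and then run the derivation recursion to get the explicit one.
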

\begin{rem}
The constant $E$ can be explicitly determined from the proof.
\end{rem}
Of course, by a change of variables, Theorem \ref{main} can be extended
to a general lattice of the form $P \Zdst$, where $P$ in an invertible matrix.

Since in the statement of the theorem we are allowing a constant that may depend on $d$,
we can use any norm $\abs{\cdot}$. For convenience, we will use the max-norm
$\abs{x} = \max\sett{\abs{x_1},\ldots,\abs{x_d}}$. For the rest of the note, the functions $f_k, g_k$ and
the constants $A, B, C, s, t$ will be fixed.

\section{Definitions and notation}
For a function $f$, the norm $\norm{f}$ (with no subscript), will denote its $L^2$ norm.
Given a matrix $L \equiv (l_{k,j})_{k,j \in \Zdst}$ and $1 \leq h \leq d$, let
\[
D_h(L)_{k,j} := (k_h - j_h) l_{k,j}.
\]
We denote by $\norm{L}$ the norm of $L$ as an operator on $\ell^2(\Zdst)$.

For two functions $f$ and $g$, we write $f \lesssim g$ if the exists a constant $c>0$ depending only on the dimension $d$
such that $f \leq c g$. Furthermore, for functions $f,g$ depending implicitly on $u>0$
we say what $f \lesssim_u g$ if there is a constant $c>0$ depending only on the dimension $d$
such that $f \leq c^u g$.

Let us define the matrix $M \equiv (m_{k,j})_{k,j \in \Zdst}$ by \[ m_{k,j} := \ip{f_k}{f_j}. \]
Since $\sett{f_k}_k$ is a Riesz sequence, $M$ is invertible. Moreover,
$\norm{M^{-1}} \leq A^{-1}$ and $M^{-1}_{k,j}=\ip{g_k}{g_j}$.

We also note the following estimates. The proof uses some techniques taken
from \cite{fe79}.
\begin{lemma}
\label{obs}
\begin{align*}
&\mbox{\bf (a)} \qquad
W_u := \sum_{k \in \Zdst} (1+\abs{k})^{-u} \lesssim \left(1+\frac{1}{u-d}\right),
\quad k \in \Zdst, u > d.
\\
&\mbox{\bf (b)} \qquad
\sum_j (1+\abs{k-j})^{-u} (1+\abs{j})^{-u} \lesssim (1+\abs{k})^{-u},
\quad k \in \Zdst, u \geq d+1.
\\
&\mbox{\bf (c)} \qquad
\int_{\Rdst} (1+\abs{x-y})^{-u} (1+\abs{y})^{-u} dy \lesssim (1+\abs{x})^{-u},
\quad x \in \Rdst, u \geq d+1.
\end{align*}
\end{lemma}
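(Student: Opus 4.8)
All three are elementary summation/integration estimates; the plan is to carry them out while keeping every implied constant dependent on $d$ alone, which I expect to be the only real difficulty. For (a), I would work in the max-norm and count the lattice points on the sphere $\abs{k}=n$: this count is $(2n+1)^d-(2n-1)^d \lesssim n^{d-1}$ by the mean value theorem. Summing and then comparing the tail to an integral,
\[
W_u \;\lesssim\; 1+\sum_{n\ge1}n^{d-1}(1+n)^{-u} \;\le\; 1+\sum_{n\ge1}n^{d-1-u} \;\le\; 2+\int_1^\infty x^{d-1-u}\,dx \;=\; 2+\frac{1}{u-d},
\]
which is (a). The same computation with the radial density $r^{d-1}\,dr$ in place of the counting function gives the companion bound $\int_{\RRd}(1+\abs{y})^{-u}\,dy \lesssim (u-d)^{-1}$ for $u>d$; in particular $\int_{\RRd}(1+\abs{y})^{-(d+1)}\,dy \lesssim 1$, which I will reuse in (c).

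For (b), the engine is the elementary inequality $(1+\abs{k-j})(1+\abs{j}) \ge 1+\abs{k-j}+\abs{j} \ge 1+\abs{k}$, valid for all $j$. I would first dispose of the borderline exponent $u=d+1$ by splitting the sum according to whether $\abs{j}\le\abs{k}/2$ or $\abs{j}>\abs{k}/2$: in the first region $\abs{k-j}\ge\abs{k}/2$, so $(1+\abs{k-j})^{-(d+1)}\le 2^{d+1}(1+\abs{k})^{-(d+1)}$, while the remaining factor sums to $\le W_{d+1}\lesssim 1$ by (a); the second region is symmetric. Since the exponent is now the fixed number $d+1$, the factor $2^{d+1}$ is a dimensional constant, and this yields $\sum_j(1+\abs{k-j})^{-(d+1)}(1+\abs{j})^{-(d+1)}\lesssim(1+\abs{k})^{-(d+1)}$. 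For a general $u\ge d+1$ I would then peel off the exponent,
\[
\big[(1+\abs{k-j})(1+\abs{j})\big]^{-u} \;=\; \big[(1+\abs{k-j})(1+\abs{j})\big]^{-(u-d-1)}\,(1+\abs{k-j})^{-(d+1)}(1+\abs{j})^{-(d+1)},
\]
bound the first factor by $(1+\abs{k})^{-(u-d-1)}$ using the inequality above (note $u-d-1\ge 0$), and apply the case $u=d+1$ to what remains, arriving at $(1+\abs{k})^{-u}$ with a constant depending only on $d$.

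For (c) I would run the proof of (b) verbatim, replacing $\Zdst$-sums by integrals over $\RRd$ and $W_u$ by $\int_{\RRd}(1+\abs{y})^{-u}\,dy$: the case $u=d+1$ uses the dichotomy $\abs{y}\le\abs{x}/2$ versus $\abs{y}>\abs{x}/2$ together with $\int_{\RRd}(1+\abs{y})^{-(d+1)}\,dy \lesssim 1$ from (a), and the general case follows by the same factorization of the exponent. The one point to watch throughout, and the reason for splitting the \emph{exponent} rather than the domain of summation in the general-$u$ step, is to keep the constants free of $u$: a naive domain split would replace $(1+\abs{k})^{-u}$ by $(1+\abs{k}/2)^{-u}$ and force a constant $\sim 2^u$, i.e.\ only an estimate of type $\lesssim_u$. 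This bookkeeping is the only genuine obstacle.
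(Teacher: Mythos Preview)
Your proposal is correct and follows essentially the same approach as the paper: for (b) and (c) you handle the borderline case $u=d+1$ by the dichotomy $\abs{j}\le\abs{k}/2$ versus $\abs{j}>\abs{k}/2$ and then reduce the general case to it via the submultiplicativity inequality $(1+\abs{k-j})(1+\abs{j})\ge 1+\abs{k}$, which is exactly what the paper does. For (a) your shell-counting argument is a mild simplification of the paper's, which instead compares the sum directly to $\int_{\RRd}(1+\abs{x})^{-u}\,dx$ at the cost of a factor $2^u$ and then removes that $u$-dependence by splitting into the ranges $d<u\le d+1$ and $u>d+1$.
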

\begin{rem}
For items (b) and (c), the fact that $u \geq d+1$ is used to grant that the constant does not depend on $u$.
\end{rem}
\begin{proof}
First observe that for $u>d$, $\int_{\RRd} \left(1 + \abs{x} \right)^{-u} dx \lesssim \frac{1}{u-d}$.

To prove (a) we simply estimate,
\begin{align*}
\sum_{k \in \Zdst} (1+\abs{k})^{-u} &\leq \sum_{k \in \Zdst} \int_{[0,1]^d+k} (1+\abs{k})^{-u} dx.
\end{align*}
For $x \in [0,1]^d+k$, 
$1+\abs{x} \leq 2 + \abs{k} \leq 2(1+\abs{k})$,
so $(1+\abs{k})^{-u} \leq 2^u (1+\abs{x})^{-u}$. Therefore,
\begin{align*}
\sum_{k \in \Zdst} (1+\abs{k})^{-u} &\leq \sum_{k \in \Zdst} 2^u \int_{[0,1]^d+k} (1+\abs{x})^{-u} dx
\\
&\leq 2^u \int_{\RRd} (1+\abs{x})^{-u} dx \lesssim 2^u \frac{1}{u-d}.
\end{align*}
So, for $u \leq d+1$, 
\begin{align*}
\sum_k (1+\abs{k})^{-u} \lesssim \frac{1}{u-d} \leq \left(1+\frac{1}{u-d}\right).
\end{align*}
For $u > d+1$,
\begin{align*}
\sum_k (1+\abs{k})^{-u} \leq \sum_k (1+\abs{k})^{-(d+1)} 
\approx 1 \leq \left(1+\frac{1}{u-d}\right).
\end{align*}

For (b), we split the sum,
\begin{align*}
&\sum_j (1+\abs{k-j})^{-(d+1)} (1+\abs{j})^{-(d+1)}
\\
&\qquad \leq  \sum_{\abs{j}>\abs{k}/2} (1+\abs{k-j})^{-(d+1)} (1+\abs{j})^{-(d+1)}
\\
&\qquad \qquad
+ \sum_{\abs{j} \leq \abs{k}/2} (1+\abs{k-j})^{-(d+1)} (1+\abs{j})^{-(d+1)}.
\end{align*}
For the first sum, if $\abs{j}>\abs{k}/2$, then $1+\abs{j} \geq 1+\abs{k}/2 \geq 1/2(1+\abs{k})$,
and $(1+\abs{j})^{-(d+1)} \leq 2^{(d+1)} (1+\abs{k})^{-(d+1)}$.
For the second sum, if $\abs{j}\leq \abs{k}/2$, then $\abs{k-j}>\abs{k}/2$, and as before,
$(1+\abs{k-j})^{-(d+1)} \leq 2^{(d+1)} (1+\abs{k})^{-(d+1)}$.

Therefore,
\begin{align*}
&\sum_j (1+\abs{k-j})^{-(d+1)} (1+\abs{j})^{-(d+1)}
\\
&\qquad \leq 2^{(d+1)} (1+\abs{k})^{-(d+1)} \left( \sum_{\abs{j}>\abs{k}/2} (1+\abs{k-j})^{-(d+1)}
+ \sum_{\abs{j} \leq \abs{k}/2} (1+\abs{j})^{-(d+1)} \right)
\\
&\qquad \lesssim (1+\abs{k})^{-(d+1)}.
\end{align*}
We now observe that, for any $\alpha \geq 0$ and all $k \in \Zdst$,
\begin{align*}
(1+\abs{k})^\alpha \leq (1+\abs{k-j})^\alpha (1+\abs{j})^\alpha.
\end{align*}
Consequently, with $\alpha=u-d-1$,
\begin{align*}
\sum_j (1+\abs{k-j})^{-u} (1+\abs{j})^{-u} \left(1+\abs{k}\right)^\alpha
&\leq \sum_j (1+\abs{k-j})^{-(d+1)} (1+\abs{j})^{-(d+1)}
\\
&\lesssim (1+\abs{k})^{-(d+1)}.
\end{align*}
Therefore,
\begin{align*}
\sum_j (1+\abs{k-j})^{-u} (1+\abs{j})^{-u}
\lesssim (1+\abs{k})^{-u}.
\end{align*}
This is the desired estimate. Assertion (c) follows similarly.
\end{proof}

\section{Proofs}
We will first prove some lemmas and claims.
\begin{lemma}
\label{coef}
Let $\sett{c_k}_k \subseteq \bC$ be a sequence such that
\[
\abs{c_k} \leq \alpha (1+\abs{k-j})^{-t},
\quad k \in \Zdst,
\]
holds for some constant $\alpha>0$ and $j \in \Zdst$.
Then the series defining the function,
\[
f=\sum_k c_k f_k
\]
is pointwise convergent and $f$ satisfies
\[
\abs{f(x)} \lesssim_t \alpha C (1+\abs{x-j})^{-t},
\quad x \in \Rdst
\]
\end{lemma}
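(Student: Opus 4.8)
The plan is to bound $|f(x)| = |\sum_k c_k f_k(x)|$ by combining the hypothesis $|c_k| \le \alpha(1+|k-j|)^{-t}$ with the standing assumption $|f_k(x)| \le C(1+|x-k|)^{-t}$ (using the exponent $t$ in place of $s$ is legitimate since $s > t$ and $(1+r)^{-s} \le (1+r)^{-t}$), and then invoking the convolution-type estimate of Lemma \ref{obs}(b). Concretely, by the triangle inequality
\[
|f(x)| \le \sum_k |c_k|\,|f_k(x)| \le \alpha C \sum_k (1+|k-j|)^{-t}(1+|x-k|)^{-t}.
\]
First I would handle the fact that $|x-k|$ for real $x$ needs to be compared to $|\lfloor x\rceil - k|$ for a nearest lattice point $\ell$ to $x$: since $|x-\ell| \le 1$ (in the max-norm, with $\ell$ componentwise nearest integer), one has $1+|x-k| \ge \tfrac12(1+|\ell-k|)$, hence $(1+|x-k|)^{-t} \le 2^t (1+|\ell-k|)^{-t}$. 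This reduces the sum to a purely lattice sum $\sum_k (1+|k-j|)^{-t}(1+|\ell-k|)^{-t}$, which by Lemma \ref{obs}(b) (applicable since $t \ge d+1$, as $t > d$ and $t$ is an integer) is $\lesssim (1+|\ell-j|)^{-t}$. Finally I convert back: $1+|\ell-j| \ge 1+|x-j|-|x-\ell| \ge |x-j| \ge \tfrac12(1+|x-j|)$ when $|x-j|\ge 1$, and the case $|x-j| \le 1$ is trivial since $(1+|x-j|)^{-t}$ is then bounded below by $2^{-t}$. Collecting the factors of $2^t$ (there are $O(1)$ of them, each raised to the power $t$) gives a constant of the form $c^t$ for $c = c(d)$, i.e. the $\lesssim_t$ notation of the paper.

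For the pointwise convergence claim, I would note that the bound $\sum_k |c_k|\,|f_k(x)| \le \alpha C \sum_k (1+|k-j|)^{-t}(1+|x-k|)^{-t} < \infty$ — which follows a fortiori from the above, or directly from $\sum_k (1+|x-k|)^{-t} \lesssim W_t < \infty$ by Lemma \ref{obs}(a) together with boundedness of $|c_k|$ — shows the series converges absolutely at every fixed $x$, so in particular pointwise.

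The only mild obstacle is bookkeeping: making sure the exponent substitution $s \rightsquigarrow t$ is stated (the $f_k$ decay like $(1+|x-k|)^{-s}$ but we only use $(1+|x-k|)^{-t}$, which is valid since $s > t$), that Lemma \ref{obs}(b) is applied with $u = t$ where $t \ge d+1$, and that all the $2^t$-type factors are absorbed into the $\lesssim_t$ constant. There is no real analytic difficulty here; the lemma is essentially a direct application of Lemma \ref{obs}(b) after the standard lattice/continuum comparison, and it is stated mainly as a reusable tool for the proof of Theorem \ref{main}.
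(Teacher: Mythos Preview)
Your proposal is correct and follows essentially the same route as the paper's proof: reduce to a lattice convolution via a nearest-integer comparison, apply Lemma~\ref{obs}(b) with $u=t\ge d+1$, and convert back, absorbing the $2^t$ factors into $\lesssim_t$. The only cosmetic difference is that the paper handles the back-conversion in one stroke via $1+|x-j|\le 2(1+|k_0-j|)$ (from $|x-k_0|\le 1$) rather than your case split on $|x-j|\gtrless 1$.
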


\begin{proof}
Given $x \in \Rdst$, let $k_0 \in \Zdst$
be such that $x \in [0,1]^d + k_0$. Then, for any $k \in \Zdst$,
$\frac{1}{2}(1 + \abs{x-k}) \leq 1 + \abs{k_0-k} \leq 2(1 + \abs{x-k})$.
Therefore,
\[
2^{-t}(1 + \abs{x-k})^{-t} \leq (1 + \abs{k_0-k})^{-t} \leq 2^t(1 + \abs{x-k})^{-t}.
\]
Now we estimate,
\begin{align*}
\sum_k \abs{c_k}\abs{f_k(x)}
&\leq \alpha C \sum_k (1+\abs{k-j})^{-t} \left(1+\abs{x-k} \right)^{-s}
\\
&\leq \alpha C \sum_k (1+\abs{k-j})^{-t} \left(1+\abs{x-k} \right)^{-t}
\\
&\leq \alpha C 2^t \sum_k (1+\abs{k-j})^{-t} \left(1+\abs{k_0-k} \right)^{-t}
\\
&\lesssim_t \alpha C (1+\abs{k_0-j})^{-t}
\lesssim_t \alpha C (1+\abs{x-j})^{-t},
\end{align*}
where we used that $s>t$ and that $t-d \geq 1$.

This proves that the series defining $f$ is absolutely convergent at $x$ and that $f$ satisfies
the required inequality.
\end{proof}

\begin{claim}
\label{claim1}
Let $0 \leq u \leq t$ be an integer and $1 \leq h \leq d$. Then,
\[
\norm{D_h^u(M)} \lesssim C^2 W_{s-t}.
\]
\end{claim}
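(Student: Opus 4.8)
The plan is to control the entries of $M$ pointwise, then pass to $D_h^u(M)$ by absorbing the polynomial factor $(k_h-j_h)^u$ into the decay, and finally convert off-diagonal decay into an operator-norm bound via a Schur-type estimate.

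First I would estimate the entries of $M$ directly: since $|M_{k,j}| = |\ip{f_k}{f_j}| \leq \int_{\Rdst} |f_k(x)|\,|f_j(x)|\,dx \leq C^2 \int_{\Rdst} (1+\abs{x-k})^{-s}(1+\abs{x-j})^{-s}\,dx$, and $s>d+t>d+1$, Lemma \ref{obs}(c) applies with exponent $s$ and yields a constant depending only on $d$, so $\abs{M_{k,j}} \lesssim C^2 (1+\abs{k-j})^{-s}$. (This is precisely why Lemma \ref{obs}(c) was stated with the hypothesis that the exponent be $\geq d+1$: here $s \geq d+1$, so the constant is absolute and does not deteriorate with $s$.) Next, working with the max-norm we have $\abs{k_h-j_h} \leq \abs{k-j} \leq 1+\abs{k-j}$, hence
\[
\abs{D_h^u(M)_{k,j}} = \abs{k_h-j_h}^u\,\abs{M_{k,j}} \leq (1+\abs{k-j})^u\,\abs{M_{k,j}} \lesssim C^2 (1+\abs{k-j})^{u-s}.
\]
Since $0 \leq u \leq t$ and $s > d+t$, we get $s-u \geq s-t > d$, so the right-hand side is $\lesssim C^2 (1+\abs{k-j})^{-(s-t)}$; thus the entries of $D_h^u(M)$ are dominated by the convolution kernel $\psi(m) := (1+\abs{m})^{-(s-t)}$, which lies in $\ell^1(\Zdst)$ because $s-t>d$.

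Finally I would invoke the standard Schur test for convolution-dominated matrices: if a matrix $L$ satisfies $\abs{L_{k,j}} \leq \psi(k-j)$ with $\psi \in \ell^1(\Zdst)$, then $\norm{L} \leq \norm{\psi}_{\ell^1}$ (two applications of Cauchy--Schwarz: $\norm{Lc}^2 \leq \sum_k \big(\sum_j \abs{L_{k,j}}\big)\big(\sum_j \abs{L_{k,j}}\abs{c_j}^2\big) \leq \norm{\psi}_{\ell^1}^2 \norm{c}_{\ell^2}^2$). Applying this to $L = D_h^u(M)$ gives
\[
\norm{D_h^u(M)} \lesssim C^2 \sum_{m \in \Zdst} (1+\abs{m})^{-(s-t)} = C^2 W_{s-t},
\]
which is the desired bound. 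There is essentially no serious obstacle here; the only point requiring care is the one already flagged above, namely that the constant from Lemma \ref{obs}(c) must be independent of the exponent $s$, so that it can be absorbed into the $\lesssim$ notation without introducing an $s$-dependence — and this is guaranteed since $s \geq d+1$.
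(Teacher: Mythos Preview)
Your proof is correct and follows essentially the same route as the paper: bound $\abs{M_{k,j}} \lesssim C^2(1+\abs{k-j})^{-s}$ via Lemma~\ref{obs}(c), absorb the factor $(1+\abs{k-j})^u \leq (1+\abs{k-j})^t$ to get off-diagonal decay of order $s-t$, and conclude by Schur's test with $\ell^1$-norm $W_{s-t}$. The only cosmetic difference is that you phrase the Schur step as a convolution-domination estimate rather than bounding row and column sums separately, but these are the same argument.
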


\begin{proof}
For $j,k \in \Zdst$, first observe that, since $s-d \geq t \geq 1$
\begin{align*}
\abs{\ip{f_k}{f_j}} \leq
C^2 \int_{\Rdst} (1+\abs{x-k})^{-s} (1+\abs{x-j})^{-s} dx 
\lesssim C^2 (1+\abs{k-j})^{-s}.
\end{align*}
Consequently,
\begin{align*}
\abs{D_h^u(M)_{k,j}} & \leq (1+\abs{k-j})^u \abs{\ip{f_k}{f_j}}
\\
&\lesssim C^2 (1+\abs{k-j})^t (1+\abs{k-j})^{-s}
= C^2 (1+\abs{k-j})^{-(s-t)}.
\end{align*}
Therefore
\begin{align*}
\sup_k \sum_j \abs{D_h^u(M)_{k,j}},
\quad
\sup_j \sum_k \abs{D_h^u(M)_{k,j}} \lesssim C^2 W_{s-t}.
\end{align*}
Consequently, by Schur's Lemma
\[
\norm{D_h^u(M)} \lesssim C^2 W_{s-t}.
\]
\end{proof}

\begin{claim}
\label{claim1.5}
Let $0 \leq u \leq t$ be an integer and $1 \leq h \leq d$. Then,
$D_h^u(M^{-1})$ is a bounded operator on $\ell^2(\Zdst)$.
\end{claim}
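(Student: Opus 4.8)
The plan is to use the fact that $D_h$ obeys a Leibniz rule with respect to matrix multiplication. First I would record that whenever a matrix product $LN$ is defined entrywise, substituting $k_h-j_h=(k_h-m_h)+(m_h-j_h)$ in $\sum_m L_{k,m}N_{m,j}$ gives
\[
D_h(LN)=D_h(L)\,N+L\,D_h(N),
\]
and, iterating, $D_h$ applied to a word $L_1L_2\cdots L_r$ distributes over the factors. Since $D_h(I)=0$, applying this to $MM^{-1}=I$ yields the Jacobi-type formula $D_h(M^{-1})=-M^{-1}D_h(M)M^{-1}$. Applying $D_h$ to this identity repeatedly and substituting the same formula for every newly produced occurrence of $D_h(M^{-1})$, I would obtain by induction on $u$ that $D_h^u(M^{-1})$ is a finite $\mathbb{Z}$-linear combination --- with a number of summands bounded in terms of $u$ only --- of operator products of the form
\[
M^{-1}D_h^{a_1}(M)\,M^{-1}D_h^{a_2}(M)\cdots M^{-1}D_h^{a_r}(M)\,M^{-1},
\qquad a_1+\cdots+a_r=u,\ \ a_i\ge1,
\]
so in particular $r\le u$ and each $a_i\le u\le t$.

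Granting that expansion, boundedness is immediate. In every such product there are at most $u\le t$ factors $D_h^{a_i}(M)$, each with exponent $a_i\le t$, so Claim \ref{claim1} gives $\norm{D_h^{a_i}(M)}\lesssim C^2 W_{s-t}$; together with $\norm{M^{-1}}\le A^{-1}$ and submultiplicativity of the operator norm, every summand --- hence $D_h^u(M^{-1})$ itself --- is bounded. (For $u=0$ this is just $\norm{M^{-1}}\le A^{-1}$.) Retaining these bounds also produces an explicit estimate for $\norm{D_h^u(M^{-1})}$, which is what the rest of the argument will feed on.

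The point that requires care is the legitimacy of the rearrangements: the Leibniz rule and its iteration split infinite sums, and a priori we only know $M^{-1}$ is bounded, with no control on the off-diagonal behaviour of its entries, so the intermediate series need not be absolutely convergent. I would sidestep this via a Neumann series rather than by invoking decay of $M^{-1}$ (which is essentially the statement the paper is heading towards). Since $\sett{f_k}_k$ is a Riesz sequence, $M$ is self-adjoint with $AI\le M\le BI$, so one can write $M^{-1}=\tfrac{2}{A+B}\sum_{n\ge0}N^n$ with $N:=I-\tfrac{2}{A+B}M$, $\norm{N}\le\tfrac{B-A}{B+A}<1$, and $D_h^a(N)=-\tfrac{2}{A+B}D_h^a(M)$ bounded for $a\le t$ by Claim \ref{claim1}. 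For each fixed $n$, $N^n$ is an honest finite product of bounded operators, so $D_h^u(N^n)$ is unambiguously defined, equals the Leibniz expansion of the word $N\cdots N$, and has operator norm bounded by a constant (independent of $n$, depending on $u$, $\norm{N}$ and the $\norm{D_h^a(M)}$ for $a\le u$) times a polynomial in $n$ times $\norm{N}^{n}$; as $\norm{N}<1$, the series $\tfrac{2}{A+B}\sum_n D_h^u(N^n)$ converges in operator norm. Finally, operator-norm convergence forces entrywise convergence, and $D_h^u$ acts on each matrix entry by the fixed scalar $(k_h-j_h)^u$, so the limit of $\tfrac{2}{A+B}\sum_n D_h^u(N^n)$ is exactly the matrix with entries $(k_h-j_h)^u(M^{-1})_{k,j}$, namely $D_h^u(M^{-1})$. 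This simultaneously validates the formula used above and shows that $D_h^u(M^{-1})$ is a bounded operator on $\ell^2(\Zdst)$. The main obstacle is precisely this convergence bookkeeping; once it is in place, everything else is the routine algebra of the product rule.
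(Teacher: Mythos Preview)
Your argument is correct and takes a genuinely different route from the paper. The paper proves Claim~\ref{claim1.5} by invoking Jaffard's theorem as a black box: since $M$ has off-diagonal decay $\abs{M_{k,j}}\lesssim(1+\abs{k-j})^{-s}$, Jaffard's result yields the same type of decay for $M^{-1}$, and then a Schur estimate gives boundedness of $D_h^u(M^{-1})$. You instead bypass Jaffard entirely by expanding $M^{-1}$ as a Neumann series in $N=I-\tfrac{2}{A+B}M$, applying the Leibniz rule to each finite product $N^n$ (where all factors and their $D_h$-derivatives up to order $t$ are bounded by Claim~\ref{claim1}, so the entrywise rearrangements are justified by Cauchy--Schwarz), bounding $\norm{D_h^u(N^n)}$ by a polynomial in $n$ times $\norm{N}^n$, and summing. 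The identification of the operator-norm limit with $D_h^u(M^{-1})$ via entrywise convergence is clean and correct.

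What each approach buys: the paper's route is short but imports a deep non-explicit result, which is somewhat at odds with the article's stated aim of producing explicit constants; indeed Remark~\ref{rem3} of the paper flags exactly this point and mentions that alternatives exist. Your Neumann-series argument is fully self-contained and elementary, and in principle even yields an explicit bound on $\norm{D_h^u(M^{-1})}$ directly---though that bound would depend on $B$ through $q=\tfrac{B-A}{B+A}$, so one still wants the recursion of Claim~\ref{claim2} afterwards to obtain the $B$-free estimate stated in Theorem~\ref{main}. Your first two paragraphs (the Jacobi-type expansion) are not needed for Claim~\ref{claim1.5} itself---the Neumann-series paragraph already proves boundedness outright---but they correctly anticipate Claim~\ref{claim2}, and once boundedness is in hand the Leibniz manipulations on $MM^{-1}=I$ are indeed legitimate, as you note.
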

\begin{proof}
In Claim \ref{claim1} we have shown that the matrix $M$ satisfies the off-diagonal decay
condition,
\begin{align*}
\abs{M_{k,j}} \leq K (1+\abs{k-j})^{-s},
\end{align*}
for some constant $K>0$ that depends on $d$ and $C$.

Jaffard's Theorem (\cite{ja90}, see also \cite{gr04}) implies that $M^{-1}$ satisfies
a similar off-diagonal decay condition,
\begin{align*}
\abs{M^{-1}_{k,j}} \leq K' (1+\abs{k-j})^{-s},
\end{align*}
for some constant $K'>0$. A calculation similar to that in Claim \ref{claim1}, shows
that for $0 \leq u \leq t$, $D_h^u(M^{-1})$ is a bounded operator (and gives a norm
estimate depending on the unknown constant $K'$).
\end{proof}

\begin{claim}
\label{claim2}
Let $1 \leq h \leq d$. Then,
\[
\norm{D_h^t(M^{-1})} \lesssim_{t^2} A^{-(t+1)} C^{2t} W_{s-t}^t.
\]
\end{claim}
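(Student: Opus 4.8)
The plan is to derive a commutator-type identity for the derivation $D_h$ acting on $M^{-1}$ and then bootstrap on the order $u$ of the derivation, using Claim~\ref{claim1} to control the derivatives of $M$ and Claim~\ref{claim1.5} to guarantee that all the operators that appear are \emph{a priori} bounded (so that the algebraic manipulations are justified). The starting point is the Leibniz-type rule: since $(k_h - j_h) = (k_h - l_h) + (l_h - j_h)$, applying $D_h$ to the product $M M^{-1} = I$ gives
\[
D_h(M^{-1}) = - M^{-1}\, D_h(M)\, M^{-1},
\]
and more generally, iterating, $D_h^u(M^{-1})$ can be written as a sum of terms of the form $M^{-1} D_h^{u_1}(M) M^{-1} D_h^{u_2}(M) M^{-1} \cdots D_h^{u_r}(M) M^{-1}$ with $u_1 + \cdots + u_r = u$ and each $u_i \geq 1$. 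The number of such terms depends only on $u \leq t$, hence is $\lesssim_{t}$-controlled (in fact bounded by something like $t!$, which is absorbed into the $c^{t^2}$-type constant implicit in $\lesssim_{t^2}$).

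With this expansion in hand, the estimate for $u = t$ follows by taking operator norms term by term. Each factor $M^{-1}$ contributes $\norm{M^{-1}} \leq A^{-1}$; there are $r+1 \leq t+1$ of them, giving $A^{-(t+1)}$ (here one uses $A \leq B$ and, if necessary, bounds the fewer-than-$t+1$ factors crudely, or simply notes $r+1$ ranges up to $t+1$ and $A^{-1} \geq$ whatever is needed — more carefully, each term has exactly $r+1$ copies of $M^{-1}$ with $r \le t$, so $A^{-(r+1)} \le A^{-(t+1)}$ provided $A \le 1$, which we may assume, or else handle the normalization separately). Each factor $D_h^{u_i}(M)$ with $1 \leq u_i \leq t$ contributes, by Claim~\ref{claim1}, a bound $\lesssim C^2 W_{s-t}$; there are $r \leq t$ of them, giving $(C^2 W_{s-t})^t = C^{2t} W_{s-t}^t$ (again bounding $r$ by $t$ and using that $C^2 W_{s-t} \geq 1$, which holds since $C \geq 1$ and $W_{s-t} \geq 1$). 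Multiplying everything and collecting the combinatorial constant into the $\lesssim_{t^2}$ notation yields exactly
\[
\norm{D_h^t(M^{-1})} \lesssim_{t^2} A^{-(t+1)} C^{2t} W_{s-t}^t.
\]

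I would carry this out in two steps: first establish the iterated Leibniz identity by induction on $u$ (the base case $u=1$ being the displayed identity above, and the inductive step using $D_h(XY) = D_h(X) Y + X D_h(Y)$ applied to products of matrices, which holds because $D_h$ is a derivation with respect to the coordinate $k_h - j_h$ being additive along compositions); second, insert the bounds from Claims~\ref{claim1} and~\ref{claim1.5}. The role of Claim~\ref{claim1.5} is crucial and easy to overlook: it ensures each partial operator $D_h^{u_i}(M^{-1})$ occurring in intermediate stages of the induction is genuinely a bounded operator, so that regrouping products and passing norms through is legitimate; without it one would be manipulating formal matrix products with no guarantee of convergence.

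The main obstacle is bookkeeping rather than conceptual: one must verify that the combinatorial factor counting the terms in the iterated Leibniz expansion depends only on $t$ (not on $s$ or the matrix entries) and grows slowly enough to be swallowed by the $c^{t^2}$ slack in the $\lesssim_{t^2}$ notation, and one must be careful about the exact powers of $A$ and $C^2 W_{s-t}$ — the clean exponents $t+1$ and $t$ in the statement require either the normalizations $A \leq 1 \leq C$, $W_{s-t} \geq 1$ (all of which are available or harmless) or a slightly more careful term-by-term accounting. None of this is deep, but it is exactly the place where an error in the final constant $D$ of Theorem~\ref{main} would creep in, so it deserves care.
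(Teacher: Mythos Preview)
Your approach is correct and is essentially the same as the paper's: both proofs exploit that $D_h$ is a derivation, apply Leibniz to $M^{-1}M=I$, and invoke Claim~\ref{claim1.5} to justify the algebra. The only cosmetic difference is that the paper writes the binomial identity
\[
0=D_h^u(M^{-1}M)=\sum_{l=0}^{u}\binom{u}{l}D_h^l(M^{-1})D_h^{u-l}(M)
\]
and solves it \emph{recursively} for $\norm{D_h^u(M^{-1})}$ (picking up a factor $A^{-1}C^2W_{s-t}\cdot 2^u$ at each step), whereas you unfold the recursion into the closed-form alternating product $M^{-1}D_h^{u_1}(M)M^{-1}\cdots D_h^{u_r}(M)M^{-1}$; the two are equivalent and yield the same constants. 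The one soft spot in your write-up is the phrase ``provided $A\le 1$, which we may assume'': this is not given, and the paper handles it more cleanly by observing (via Claim~\ref{claim1}) that $A\le\norm{M}\lesssim C^2 W_{s-t}$, hence $A^{-1}C^2W_{s-t}\gtrsim 1$, which lets you pass from $A^{-(r+1)}(C^2W_{s-t})^r$ to $A^{-(t+1)}(C^2W_{s-t})^t$ for every $r\le t$ without any assumption on the size of $A$. Plugging that observation into your argument closes the gap.
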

\begin{proof}
Let $u$, $1 \leq u \leq t$ be an integer. Consider the map $L \mapsto D_h(L)$, defined for those matrices $L$ such that $D_h(L)$ is a bounded operator. We know from Claims \ref{claim1} and \ref{claim1.5} that we can apply $D_h$ to both $M$ and $M^{-1}$, up to $t$ times. The map $D_h$ is a derivation from its domain into $B(\ell^2)$. This means that it satisfies the equation $D_h(AB) = D_h(A) B + A D_h(B)$, provided that $D_h(A)$ and $D_h(B)$ are both well defined. Iterating this equation yields,
\begin{align}
\label{tosolve}
0 = D_h^u(M^{-1}M) = \sum_{l=0}^u \binom{u}{l} D_h^l(M^{-1}) D_h^{u-l}(M).
\end{align}
Since all the operators involved in the last formula are bounded, we can associate factors to obtain,
\begin{align}
\label{solved}
D_h^u(M^{-1}) = -\sum_{l=0}^{u-1} \binom{u}{l} D_h^l(M^{-1}) D_h^{u-l}(M) M^{-1}.
\end{align}
It follows that
\begin{align*}
\norm{D_h^u(M^{-1})} \lesssim A^{-1} C^2 W_{s-t} \sum_{l=0}^{u-1} \binom{u}{l} \norm{D_h^l(M^{-1})}.
\end{align*}

Consider the numbers $v_u := \max_{0 \leq u' \leq u} \norm{D_h^{u'}(M^{-1})}$.
Using Claim \ref{claim1}, we observe that $A \leq \inf\sett{\abs{\lambda}: \lambda \in \sigma(M)} \leq \norm{M} \lesssim C^2 W_{s-t}$, where
$\sigma(M)$ denotes the spectrum of the Gramian matrix $M$ as an operator on $\ell^2$.

Consequently $1 \lesssim A^{-1} C^2 W_{s-t}$ and the numbers $v_u$ satisfy,
\begin{align*}
v_u &\lesssim A^{-1} C^2 W_{s-t} \sum_{l=0}^{u-1} \binom{u}{l} v_{u-1} 
\leq A^{-1} C^2 W_{s-t} 2^u v_{u-1},
\\
v_0 &\leq A^{-1}.
\end{align*}

Iterating the last relation yields,
\begin{align*}
v_t \lesssim_{t} A^{-(t+1)} C^{2t} W_{s-t}^t 2^\frac{t(t+1)}{2}
\approx_{t^2} A^{-(t+1)} C^{2t} W_{s-t}^t
\end{align*}
\end{proof}

Now we can prove the main result.
\begin{proof}[Proof of Theorem \ref{main}]
For $k,j \in \Zdst$, let $c_{kj} := \ip{g_k}{g_j}$. 
Since $M^{-1}_{kj} = c_{kj}$, using Claim \ref{claim2} we get,
\begin{align*}
\abs{k_h-j_h}^t \abs{c_{k,j}} \leq \norm{D_h^t(M^{-1})e_j}_2 \lesssim_{t^2}
A^{-(t+1)} C^{2t} W_{s-t}^t.
\end{align*}

Recalling that we are using the max-norm, we obtain
\begin{align*}
\abs{k-j}^t \abs{c_{k,j}} \lesssim_{t^2} A^{-(t+1)} C^{2t} W_{s-t}^t.
\end{align*}

For $k \not= j$, $\abs{k-j} \geq 1$, so $(1+\abs{k-j})^t \lesssim_t \abs{k-j}^t$. Consequently,
\begin{align*}
(1+\abs{k-j})^t \abs{c_{k,j}} \lesssim_{t^2} A^{-(t+1)} C^{2t} W_{s-t}^t.
\end{align*}
holds for $k \not= j$. For $k=j$ we use that $\abs{c_{kk}} = \norm{g_k}^2 \leq A^{-1}$.
Indeed, since $\sett{f_k}_k$ is a Riesz sequence with lower bound $A$, it is also a
frame sequence with the same lower bound. That is, the inequality,
$\sum_k \abs{\ip{f}{f_k}}^2 \geq A \norm{f}^2$ holds for all $f \in S$. Setting $f=g_j$, 
it follows that $\norm{g_j}^2 \leq A^{-1}$.

Since $W_s, C \geq 1$ and we have that
\begin{align*}
(1+\abs{k-j})^t \abs{c_{k,j}} \lesssim_{t^2} (A^{-(t+1)}+A^{-1}) C^{2t} W_{s-t}^t.
\end{align*}
for all $k,j \in \Zdst$.

Since for each $k \in \Zdst$, $g_k = \sum_j c_{k,j} f_j$, Lemma \ref{coef}
implies that
\[
\abs{g_k(x)} \lesssim_{t^2} (A^{-(t+1)}+A^{-1}) C^{2t+1} W_{s-t}^t (1+\abs{x-k})^{-t},
\quad x \in \Rdst.
\]
By Lemma \ref{obs}, $W_{s-t} \lesssim \left(1+\frac{1}{(s-t-d)}\right)$.
Therefore $W_{s-t}^t \lesssim_t \left(1+\frac{1}{(s-t-d)}\right)^t$.
\end{proof}
We conclude that,
\begin{align}
\label{almost}
\abs{g_k(x)} \lesssim_{t^2} (A^{-(t+1)}+A^{-1}) C^{2t+1}  \left(1+(s-t-d)^{-1}\right)^t (1+\abs{x-k})^{-t},
\quad x \in \Rdst.
\end{align}
This is almost the desired estimate; there is an extra $A^{-1}$ term. We get rid of it
using the following argument.

Let $\alpha$ be a positive real number. Consider the functions
$\widetilde{f_k} := \alpha f_k$. They form a Riesz sequence with lower and upper bounds
$\alpha^2 A$ and $\alpha^2 B$ respectively. Moreover, the functions forming the dual basis are given by $\widetilde{g_k} := \alpha^{-1} g_k$. The family $\sett{\widetilde{f_k}}_k$ satisfies a concentration condition similar to the one possessed by the original functions, but with a constant $\alpha C$. We apply the estimate in equation \eqref{almost} to this new family and learn that, for $x \in \Rdst$,
\begin{align*}
\abs{\widetilde{g_k}(x)} \lesssim_{t^2} 
\left(\alpha^{(-2t-2)} A^{-(t+1)}+\alpha^{-2} A^{-1}\right)
\alpha^{(2t+1)} C^{2t+1}
\left(1+(s-t-d)^{-1}\right)^t (1+\abs{x-k})^{-t}.
\end{align*}
Therefore, for $x \in \Rdst$,
\begin{align*}
\abs{g_k(x)} \lesssim_{t^2} 
(A^{-(t+1)}+\alpha^{2t} A^{-1}) C^{2t+1}
\left(1+(s-t-d)^{-1}\right)^t (1+\abs{x-k})^{-t}.
\end{align*}
Letting $\alpha \longrightarrow 0^+$, we get the desired estimate.
\section{Final remarks}

\begin{rem}
\label{rem3}
The most delicate part of the proof was the justification of the formal computations in Claim \ref{claim2}, that allowed us to solve $\norm{D^u_h(M^{-1})}$ recursively from the binomial formula. In order to associate factors, we needed to know that $M^{-1}$ belongs to the domain of $D^u_h$. 

To see why this is important, let us consider the case where the original basis is formed by the integer shifts of a single function. In this case, the Gramian matrix $M$ is a convolution operator, having some sequence $a$ as kernel. The matrix $M^{-1}$ is also a convolution operator and has a kernel $b$ that satisfies,
\begin{equation}
\label{aconvb}
a*b=\delta.
\end{equation}
The decay of $a$ and $b$ can be reformulated in terms of smoothness estimates for their Fourier transforms $\hat{a}$ and $\hat{b}$. The argument in the proof of Theorem \ref{main} amounts, in this case, to transferring smoothness estimates from $\hat{a}$ to its pointwise inverse $\hat{b}$ by an iterated application of the Leibniz product rule (cf. Equation \eqref{tosolve}.)

The obstacle to derive Equation \ref{solved} formally from Equation \ref{tosolve} is that the latter equation does not determine, by itself, the derivatives of $M^{-1}$.
For example, when $a$ is a finitely supported sequence, Equation \eqref{aconvb} is a recurrence equation in $b$, that has many solutions even if $\hat{a}$ has no zeros. The sequence $b$ that we are looking for (that is, the kernel of $M^{-1}$) can be singled out as the only solution of Equation \eqref{aconvb} that belongs to $\ell^2$.

In the case of a basis formed by the integer shifts of a single function, the justification
we need follows from some careful regularization argument for Sobolev spaces. In our case, this justification was done in Claim \ref{claim1.5}, by resorting to Jaffard's result \cite{ja90}. Another possible approach would be to use the general theory of unbounded derivations, in particular the results in \cite{brro75} and \cite{kish93}. However, this would require adapting those results to non-densely defined derivations.

The use of derivations is somehow implicit in Jaffard's original proof \cite{ja90}.
A recent article from Gr\"ochenig and Klotz \cite{grkl09} exploits in great depth the use of derivations to study off-diagonal decay of infinite matrices and approximation by classes of banded matrices.
\end{rem}

\begin{rem}
\label{rates}
As observed in the introduction, in Theorem \ref{main}, the decay condition on the original basis $\sett{f_k}_k$ is not shown to be fully shared by the dual basis $\sett{g_k}_k$ (although the results in \cite{gr04} show that the full decay condition is actually preserved.) This is due to the kind of objects used to bound the decay of the entries of $M$ and $M^{-1}$. According to Remark \ref{rem3} above, in the case of a basis formed by the integer translates of a single function, the estimates given amount to smoothness estimates for the symbol $\tau$ of some convolution operator. In Claims \ref{claim1} and \ref{claim2} we bounded the size of the entries of a matrix by means of its $\ell^2 \to \ell^2$ operator norm and controlled that norm by interpolating its $\ell^1 \to \ell^1$ and $\ell^\infty \to \ell^\infty$ norms (by Schur's lemma.) This would correspond in the case of a convolution operator to bounding the $L^\infty$ norm of its symbol $\tau$, from above by its $L^1$ norm and from below by its $L^2$ norm. This accounts, in that case, for the loss of some precision in the estimates.
\end{rem}
\begin{rem}
Finally, it should be noticed that the techniques used in this article could be applied to a general \emph{polynomially self-localized basis} in the abstract sense of \cite{fogr05}, where a basis is called self-localized if its Gramian matrix presents certain off-diagonal decay.
\end{rem}

\section*{Acknowledgements}
The author thanks Hans Feichtinger and Ursula Molter for some insightful discussions and Karlheinz Gr\"ochenig for his comments and for sharing an early draft of \cite{grkl09}.

Also, the author holds a fellowship from the CONICET and thanks this institution for its support. His research is also partially supported by grants: PICT06-00177, CONICET PIP 112-200801-00398 and UBACyT X149.

This note was partially written during a long-term visit to NuHAG in which the author was
supported by the EUCETIFA Marie Curie Excellence Grant (FP6-517154, 2005-2009).
\bibliographystyle{plain}

\end{document}